\documentclass[12pt, reqno]{amsart}
\usepackage{mathrsfs}
\usepackage{amsfonts}
\usepackage[centertags]{amsmath}
\usepackage{amssymb,array}
\usepackage{amsthm}
\usepackage{stmaryrd}
\usepackage{graphicx}
\usepackage{caption}
\usepackage{ytableau}
\usepackage{enumerate,enumitem}
\usepackage{MnSymbol}
\SetLabelAlign{center}{\hfill #1\hfill}
\usepackage[textwidth=16cm, hmarginratio=1:1]{geometry}
\usepackage{tikz-cd, tikz}
\usepackage{rotating}
\usepackage[all,cmtip]{xy}
\usepackage[titletoc, title]{appendix}
\usepackage{amscd}
\usepackage[colorlinks]{hyperref}
\usepackage{float}
\usepackage{dsfont} 
\hypersetup{
bookmarksnumbered,
pdfstartview={FitH},
breaklinks=true,
linkcolor=blue,
urlcolor=blue,
citecolor=blue,
bookmarksdepth=2
}

\theoremstyle{plain}
   
   \newtheorem{theorem}{Theorem}[section]

   \newtheorem{lemma}[theorem]{Lemma}
   \newtheorem{corollary}[theorem]{Corollary}

\theoremstyle{definition}
   \newtheorem{definition}[theorem]{Definition}
   
   \newtheorem{example}[theorem]{Example}

   \newtheorem{remark}[theorem]{Remark}

\numberwithin{equation}{section}

\newcommand{\be}{\begin{enumerate}}

    \newcommand{\ene}{\end{enumerate}}

    \newcommand{\QQ}{\mathbb{Q}}
    
    \newcommand{\NN}{\mathbb{N}}
    \newcommand{\PP}{\mathbb{P}}
    
    \newcommand{\CC}{\mathbb{C}}


    \newcommand{\Hom}{\operatorname{Hom}}

    \newcommand{\Ima}{\operatorname{Im}}

    \newcommand{\Gr}{{\operatorname{Gr}}}
    \newcommand{\GL}{{\operatorname{GL}}}
    \newcommand{\undd}{\underline{\operatorname{dim}}}
    
    \newcommand{\odim}{\operatorname{dim}}

    \newcommand{\HH}{\operatorname{H}}


    \newcommand{\Irr}{\operatorname{Irr}}
    
    \newcommand{\fg}{\operatorname{\mathfrak{g}}}

    \newcommand{\fn}{\operatorname{\mathfrak{n}}}

    
    
    \newcommand{\la}{\langle}
    \newcommand{\ra}{\rangle}


    \newcommand{\bfi}{\mathbf{i}}
     \newcommand{\bfI}{\mathbf{I}}
    \newcommand{\bfj}{\mathbf{j}}


    \newcommand{\cF}{\mathcal{F}}

    \newcommand{\cB}{\mathcal{B}}

    \newcommand{\cE}{\mathcal{E}}
    
    \newcommand{\CM}[1]{\mathcal{M}}

    
    \newcommand{\adb}{{\alpha,\beta}}
    \newcommand{\apb}{{\alpha+\beta}}

    \newcommand{\tM}[1]{\widetilde{{#1}}}

    \newcommand{\squw}[2]{[{#1}_{1}{#1}_{2}\cdots{#1}_{#2}]}

    \newcommand{\Seteq}[2]{=\left\{{#1}\mid\,{#2}\right\}}

    \newcommand{\Ind}{\operatorname{Ind}}

\newlength{\mysizetiny}
\setlength{\mysizetiny}{0.3em}
\newlength{\mysizesmall}
\setlength{\mysizesmall}{0.8em}
\newlength{\mysize}
\setlength{\mysize}{1.3em}
\newlength{\mysizelarge}
\setlength{\mysizelarge}{2em}


\begin{document}

\title{Geometrizations of quantum groups and dual semicanonical bases.}
\author{Yingjin Bi}
\address{Department of Mathematics, Harbin Engineering University}
\email{yingjinbi@mail.bnu.edu.cn}
\date{} 

\begin{abstract}
In this paper, we give a geometrization of semicanonical bases of quantum groups via Grothendieck groups of the derived categories of Lusztig's nilpotent varieties. Additionally, we provide a characterization of the dual semicanonical bases using Serre polynomials of Grassmannians of modules over preprojective algebras.
\end{abstract}

\maketitle 

\section{Introduction}
The current study aims to extend the concept of semicanonical bases $\mathcal{S}$ in the enveloping algebra $U(\mathfrak{n})$ to the semicanonical bases $\mathcal{S}_q$ in the quantum groups $U_q(\mathfrak{n})$ such that $\mathcal{S}_{q=1}=\mathcal{S}$. In \cite{lusztig2000semicanonical}, Lusztig introduced the semicanonical bases of the enveloping algebra $U(\mathfrak{n})$ based on Lusztig's nilpotent varieties. Following Lusztig's method, Geiss, Leclerc, and Schröer utilized modules over preprojective algebras to construct the dual semicanonical bases of the coordinate rings of unipotent groups $\mathbb{C}[N]$, see \cite{geiss2005semicanonical},\cite{geiss2007semicanonical}. 

It is essential to introduce dual semicanonical bases as they are connected to the cluster structure on the coordinate rings $\mathbb{C}[N]$ of unipotent subgroups $N$ through the representations of preprojective algebras, see \cite{geiss2005semicanonical}. For quantum cluster algebras, in \cite{geiss2013cluster}, Geiss, Leclerc, and Schröer demonstrated a quantum cluster algebra structure on the quantum coordinate rings $A_q(\mathfrak{n}(w))$ for a specific Weyl group element $w \in W$. All cluster monomials in $A_q(\mathfrak{n}(w))$ are part of the dual semicanonical bases. The conjecture that these cluster monomials also belong to the dual canonical bases was proven by Qin for the Dynkin case and all symmetrizable Kac-Moody types, see \cite{qin2017triangular} and \cite{qin2020dual}. In \cite{kang2018monoidal}, Kang, Kim, Kashiwara, and Oh utilized $R$-matrices of modules over quiver Hecke algebras $R_Q$ to validate this conjecture for symmetric Kac-Moody types. This conjecture establishes a relationship between dual canonical bases and dual semicanonical bases. 

However, there is limited research on the quantum counterpart of dual semicanonical bases. One key question is how to characterize these dual semicanonical bases. Our primary result is describing dual semicanonical bases in terms of Serre polynomials of Grassmannians of modules over preprojective algebras. Building on Leclerc's findings, we can depict the dual canonical bases using shuffle algebras. Similarly, we will present the explicit form of dual semicanonical bases using shuffle algebras.\\

\noindent
Let us interpret our main results. For a dimension vector $\alpha\in\NN[Q_0]$ of a quiver $Q$, where $Q_0$ is the set of vertices of $Q$. Let $\Lambda_\alpha$ denote the Lusztig's nilpotent variety of dimension vector $\alpha$, and $G_\alpha$ refer to the algebraic group $\prod_{i\in Q_0}\GL(\alpha_i)$. Define $D_{G_\alpha}^b(\Lambda_\alpha)$ as the $G_\alpha$-equivariant bounded derived category of constructible sheave on $\Lambda_\alpha$, and as  $K(\Lambda_\alpha)$ its Grothendieck group.

For a partition of the dimension vector $\alpha=\beta+\gamma$, introduce an Induction functor from $D_{G_\beta}^b(\Lambda_\beta)\times D_{G_\gamma}^b(\Lambda_\gamma)$ to $D_{G_\alpha}^b(\Lambda_\alpha)$. This functor leads to a product on $K(\Lambda_\beta)\times K(\Lambda_\gamma)$. Let $K^{nil}(\Lambda_Q)$ be the subalgebra of $K(\Lambda_Q)$ generated by the constant sheaves $\CC_i$ of $\Lambda_{\alpha_i}$ for all $i\in Q_0$, where $\alpha_i$ represents the simple root for $i$. We say two complexes $\cF$ and $\cE$ in $D_{G_\alpha}^b(\Lambda_\alpha)$ \emph{almost equal} if there is an open dense subset $U_\alpha\subset \Lambda_\alpha$ such that $\cF_{\mid U_\alpha}=\cE_{\mid U_\alpha}$. let $\tM{K}^{nil}(\Lambda_\alpha)$ be the quotient of $K^{nil}(\Lambda_\alpha)$ by the almost equal relation. The dual space of $\tM{K}^{nil}(\Lambda_Q)$ is denoted as $\tM{K}^*(\Lambda_Q)$ . The following theorem is one of our main results.
\begin{theorem}
    For a quiver $Q$ without multiple arrows between any two vertices, let $U_q(\fn_Q)$ be the positive-part of the quantum group corresponding to $Q$. There exists an $\CC(q)$-algebra isomorphism 
    \[\Phi:\, \tM{K}^{nil}(\Lambda_Q)\cong U_q(\fn_Q)\]
    such that the constant sheaf of irreducible components forms a basis of $U_q(\mathfrak{n}_Q)$, known as the \emph{semicanonical base}.
\end{theorem}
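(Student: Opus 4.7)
The strategy is to define $\Phi$ on Chevalley generators by sending $E_i \mapsto [\CC_{\Lambda_{\alpha_i}}]$ (the class of the constant sheaf on the single-point variety $\Lambda_{\alpha_i}$), extend multiplicatively using the induction product, and then show the resulting map descends through the almost equal quotient to give a bijection with $U_q(\fn_Q)$. The parameter $q$ should come from the cohomological shift $[1]$ in the equivariant derived category, so that $K(\Lambda_\alpha)$ carries a natural $\CC(q)$-module structure compatible with the Induction functor.

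First I would verify that the quantum Serre relations hold for the generating classes $[\CC_i]$ in $\tM{K}^{nil}(\Lambda_Q)$. The key computation is for a pair $i,j \in Q_0$ linked by an edge: one unfolds the induction products in both orderings by pulling back along the correspondence diagram defining the Induction functor, and observes that the relevant fibers over the generic (open dense) stratum of $\Lambda_{\alpha_i + \alpha_j}$ or $\Lambda_{2\alpha_i + \alpha_j}$ are projective spaces or points, whose equivariant cohomology contributes precisely the quantum integers $[n]_q$. Because we are working modulo the almost equal relation, the contributions from non-generic strata are killed, which is exactly the mechanism that forces the genuine quantum Serre relations to hold; without this quotient, only the ``classical'' specialization $q=1$ would survive. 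By the Chevalley-Serre presentation of $U_q(\fn_Q)$, this produces a well-defined $\CC(q)$-algebra homomorphism $\Phi: U_q(\fn_Q) \to \tM{K}^{nil}(\Lambda_Q)$.

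Next I would establish surjectivity, which is essentially tautological: the algebra $\tM{K}^{nil}(\Lambda_Q)$ is defined to be generated by the $[\CC_i]$. For injectivity, I would argue by weight and compare graded dimensions. The weight space $\tM{K}^{nil}(\Lambda_\alpha)$ is spanned by inductive products of the generators, which by induction on $|\alpha|$ and the open-dense localization identify with classes of constant sheaves on top-dimensional irreducible components of $\Lambda_\alpha$. By Lusztig's theorem, the set of top-dimensional irreducible components $\Irr(\Lambda_\alpha)$ is in bijection with the PBW/crystal basis of the weight-$\alpha$ part of $U(\fn_Q)$, hence has cardinality equal to $\dim_{\CC(q)} U_q(\fn_Q)_\alpha$. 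A dimension count then forces $\Phi$ to be an isomorphism in each weight, and simultaneously identifies the classes of constant sheaves of irreducible components with a $\CC(q)$-basis, which is the sought-after semicanonical basis $\mathcal{S}_q$.

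The main obstacle I expect is the second step: rigorously tracking the $q$-grading through the induction correspondence and showing that the almost equal quotient precisely kills the higher-codimension contributions responsible for the deformation of the Serre relations. This requires a careful stratification of $\Lambda_\alpha$ by the dimensions of $\Hom$-spaces (or by conormal bundle types), together with the observation that the generic stratum carries the PBW/semicanonical class while the lower strata represent elements whose almost equal class vanishes. The restriction to quivers without multiple arrows is presumably used here to guarantee that the relevant generic fibers are sufficiently simple (projective spaces coming from $\PP(\Ext^1)$ with dimension controlled by the Cartan matrix entry $-a_{ij}\in\{0,1\}$), so that the Serre computation closes up cleanly.
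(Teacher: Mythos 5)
Your proposal follows essentially the same route as the paper: verify the quantum Serre relations geometrically via the induction product, where the almost-equal quotient kills the lower strata (the paper's $g(q)\bfI_0$ term); get surjectivity from the definition of $\tM{K}^{nil}$; and prove injectivity by a weight-space dimension count against Lusztig's parametrization of a basis of $U(\fn)_\alpha$ by $\Irr(\Lambda_\alpha)$. The ``main obstacle'' you correctly identify --- showing the classes $\bfI_Z$ of irreducible components lie in and span $\tM{K}^{nil}(\Lambda_\alpha)$ --- is handled in the paper exactly by the stratification you gesture at, namely Lusztig's $\Lambda_{V,i,p}$ (by $\dim V_i/\sum\Ima z_h$) with a double induction on $\alpha$ and on $t_i(Z)$, following \cite[Lemma 2.4]{lusztig2000semicanonical}.
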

 For a dimension vector $\alpha\in\NN[Q_0]$, let $\la Q_0\ra_\alpha$ be the set of words $\bfi=[i_1i_2\cdots i_n]$ of $Q_0$ with $\sum_{k=1}^n i_k=\alpha$. Define $\CC(q)\la Q_0\ra_\alpha$ as the $\CC(q)$-vector space generated by all $\bfi\in\la Q_0\ra_\alpha$. Let $\CC(q)\la Q_0\ra$ denote the space $\bigoplus_{\alpha\in \NN[Q_0]}\CC(q)\la Q_0\ra_{\alpha}$. Following \cite{leclerc2004dual}, there exists an embedding from $A_q(\fn_Q)$ to $\CC(q)\la Q_0\ra$. 

\begin{theorem}
 For a quiver $Q$ without multiple arrows between any two vertices, we can embed the dual space $\tM{K}^*(\Lambda_Q)$ into $\CC(q)\la Q_0\ra$, where the image of any irreducible component $Z$ of $\Lambda_\alpha$ is equal to (up to some $q$-power)
 \[\delta_Z:=\sum_{\bfi\in\la Q_0\ra_\alpha} q^{-d_\bfi}\chi_q(\Gr_\bfi(z))\bfi \] 
 where $\chi_q(\Gr_\bfi(z))$ is the Serre polynomial of the Grassmannians $\Gr_\bfi(z)$ of a generic point $z\in Z$ for any words $\bfi\in\la Q_0\ra_{\alpha}$ and $d_\bfi=\odim\Lambda_{\bfi,\alpha}$, see (\ref{eq_lambfi}) and (\ref{eq_grassman}).
\end{theorem}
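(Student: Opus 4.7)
The plan is to combine the algebra isomorphism $\Phi$ of Theorem 1 with Leclerc's embedding of the quantum coordinate ring $A_q(\fn_Q)$ into the shuffle space $\CC(q)\la Q_0\ra$, and then to identify the resulting coefficients explicitly using the geometry of the flag varieties $\Lambda_{\bfi,\alpha}$ over $\Lambda_\alpha$.

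First, I would dualize Theorem 1 to obtain an isomorphism $\Phi^*\colon A_q(\fn_Q) = U_q(\fn_Q)^* \eqm \tM{K}^*(\Lambda_Q)$, and pre-compose its inverse with Leclerc's embedding $A_q(\fn_Q) \hookrightarrow \CC(q)\la Q_0\ra$ from \cite{leclerc2004dual}. Under that embedding, an element $x \in A_q(\fn_Q)$ of weight $\alpha$ is sent to $\sum_{\bfi \in \la Q_0\ra_\alpha} \la x,\, E_{i_1}E_{i_2}\cdots E_{i_n}\ra\, \bfi$, so the entire theorem reduces to computing the pairing $\la \delta_Z,\, E_\bfi\ra$ for $\bfi = [i_1\cdots i_n]$ and showing that it equals $q^{-d_\bfi}\chi_q(\Gr_\bfi(z))$ for a generic $z \in Z$.

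Next I would translate this pairing into geometry. Under $\Phi$ the Chevalley generator $E_i$ corresponds to the constant sheaf $\CC_i$ on $\Lambda_{\alpha_i}$, and by iterating the induction functor used to define the product on $K(\Lambda_Q)$, the monomial $E_\bfi$ corresponds (up to the normalizing $q$-shift built into the induction) to the pushforward $\pi_{\bfi,!}\CC_{\Lambda_{\bfi,\alpha}}$ along the natural projection $\pi_\bfi\colon \Lambda_{\bfi,\alpha} \to \Lambda_\alpha$, where $\Lambda_{\bfi,\alpha}$ parametrizes pairs consisting of a point $x \in \Lambda_\alpha$ together with a composition series of the corresponding $\Lambda$-module whose successive quotients are the simples $S_{i_1},\ldots,S_{i_n}$. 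Since $\delta_Z$ is the functional dual to $[\CC_Z]$, the pairing $\la \delta_Z, E_\bfi\ra$ is computed by restricting $\pi_{\bfi,!}\CC$ to a generic point $z\in Z$ and taking the Grothendieck trace; proper base change identifies the stalk with $H^*(\pi_\bfi^{-1}(z))$, and the fiber $\pi_\bfi^{-1}(z)$ is precisely the Grassmannian $\Gr_\bfi(z)$ of composition series of $z$ of type $\bfi$. Summing Frobenius traces on cohomology reproduces the Serre polynomial $\chi_q(\Gr_\bfi(z))$.

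Finally I would track the normalization: the shift by $d_\bfi = \odim \Lambda_{\bfi,\alpha}$ enters because the induction functor is normalized by $[d_\bfi]$ to be compatible with the perverse t-structure, which on the Grothendieck group multiplies by $q^{-d_\bfi}$; this supplies exactly the prefactor in the stated formula. I would also verify that $\chi_q(\Gr_\bfi(z))$ depends only on the stratum through $z$, so by constructibility of $\pi_{\bfi,!}\CC$ the map descends from $K^*(\Lambda_Q)$ to the quotient $\tM{K}^*(\Lambda_Q)$ by the almost-equal relation. The main obstacle I anticipate is the precise bookkeeping of $q$-shifts: matching Leclerc's normalization of the pairing on $A_q(\fn_Q)$ with the shifts encoded in the geometric induction functor used in Theorem 1, and confirming that the resulting cohomological sign/degree shift is exactly $-d_\bfi$; a secondary difficulty is checking that the sum over $\bfi$ is finite and well-defined on the almost-equal quotient, which requires ruling out contributions from non-generic loci where fiber dimensions jump.
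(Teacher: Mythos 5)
Your proposal follows essentially the same route as the paper: dualize the isomorphism $\Phi$ of Theorem \ref{theo_Phi}, compose with Leclerc's embedding $\varPhi(x)=\sum_{\bfi}(e_\bfi,x)\bfi$, and identify the coefficient $\la \delta_Z, e_\bfi\ra$ with the stalk of $\bfI_{i_1}\star\cdots\star\bfI_{i_n}$ at a generic point of $Z$, which Corollary \ref{cor_bfI} computes as $q^{-d_\bfi}\HH^\bullet(\Gr_\bfi(z))$. Your additional remarks on injectivity (the paper invokes Lemma \ref{lem_words}) and on constructibility of $\pi_{\bfi,!}\CC$ guaranteeing well-definedness on the almost-equal quotient are consistent with, and slightly more explicit than, the paper's argument.
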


\section{Derived categories of Lusztig's nilpotent varieties}
In this section, we review some concepts related to derived categories of constructible sheaves on Lusztig's nilpotent varieties. For a variety $X$ over $\CC$, we say a sheaf $\cF$ \emph{constructible }if there is a decomposition of $X=\sum_i X_i$ such that $X_i$ is a constructible subset and $\cF_{\mid X_i}$ is constant for each $i$. For example, for a constructible subset $U\subset X$, the sheaf $\bfI_U$, defined by $\bfI_U(u)=1$ for $u\in U$ and $\bfI_U(v)=0$ for $v\notin U$, is a constructible sheaf. We write the $D^b(X)$ for the bounded derived category of constructible sheave on $X$. 

If there exists an action of an algebraic group $G$ on $X$, we refer to a constructible sheaf $\mathcal{F}$ as \emph{equivariant} if $\mathcal{F}_{\mid G\cdot x}$ is constant for any $x \in X$. The derived category of equivariant constructible sheaves over $X$ is denoted as $D_G^b(X)$. For a $G$-invariant morphism $f:X \to Y$, we can define $f_!,f_*: D_G^b(X) \to D_G^b(Y)$ and $f^*,f^! : D_G^b(Y) \to D_G^b(X)$ as detailed in \cite{bernstein2006equivariant}. \\

\noindent
The Grothendieck group of $D_G^b(X)$ is denoted as $K_G(X)$. It is a module over the ring $ \CC[q,q^{-1}]$ defined by $q\mathcal{F}=\mathcal{F}[1]$, where $[1]$ represents the shift of the complex $\mathcal{F} \in D_G^b(X)$. For a constant sheaf $\mathbf{I}_X$ and the canonical projection $p:X \to pt$, we have $p_!\mathbf{I}_X=\sum_i \HH_i(X)$ in $K_G(pt)$ where $\HH_\bullet(X)$ denotes the Borel-Moore homology of $X$.

\subsection{Constructible functions}
Consider an algebraic variety $X$ defined over the field of complex numbers. We refer to the space of all constructible functions $f:X\to \QQ$ as $M(X)$, which is a $\QQ$-vector space. This means that $f^{-1}(a)$ is constructible for every $a\in \QQ$, and is empty for all but finitely many $a\in \QQ$.

The map $\int_X: M(X)\to \QQ$ is a $\QQ$-linear map defined as $f\mapsto \int_X f=\sum_{a\in\QQ}a\chi(f^{-1}(a))$, where $\chi$ represents the Euler characteristic. If there exists an algebraic group $G$ that acts on $X$, we define $M_G(X)$ as the $\QQ$ vector space comprising functions $f\in M(X)$ such that $f$ remains constant on every $G$-orbit within $X$.

Next, we introduce a map from $K_G(X)$ to $M_G(X)$. For a complex $\cF\in D_G^b(X)$, we define a constructible function $\Psi(\cF)$ as $$\Psi(\cF)(x):=\sum_i(-1)^i\dim\HH^i(\cF)_x.$$ Since $\cF$ is a bounded complex, $\Psi(\cF)$ is well-defined. This function induces a map as follows.
\begin{equation}
\begin{split}
       \Psi: K_G(X)&\to M_G(X)\\ 
          \cF&\mapsto \Psi(\cF)
\end{split}
\end{equation}

\subsection{Lusztig's nilpotent varieties}
In this section, we revisit the concept of Lusztig's nilpotent varieties. Given a quiver $Q=(Q_0,Q_1)$, we define its double quiver $\overline{Q}$ by adding the opposing arrows $\bar{\alpha}$ for each $\alpha\in Q_1$. We assign a function $\varepsilon:\overline{Q}\to \{1,-1\}$ such that $\varepsilon(\alpha)=1$ if $\alpha\in Q_1$ and $\varepsilon(\bar{\alpha})=-1$ if $\bar{\alpha}\in Q_1^{op}$.

The \emph{preprojective algebra} $\Lambda_Q$ of $Q$ is defined as the quotient algebra $k\overline{Q}/I$ where $I$ is the ideal generated by the equation
\begin{equation}\label{eq_prepro}
    \sum \epsilon(\alpha) \alpha\bar{\alpha}=0.
\end{equation}

For a dimension vector $\alpha$, we define $\Lambda_\alpha$ as the variety of modules over $\Lambda_Q$ with dimension vector $\alpha$. For a given path $h=\alpha_1\cdots\alpha_n$, we associate it with an element $z_h=z_{\alpha_1}\cdots z_{\alpha_n}$ for any $z\in \Lambda_\alpha$. An element $z\in\Lambda_\alpha$ is deemed \emph{nilpotent} if $z_h=0$ for any path $h$ of sufficient length. We abbreviate $\Lambda_\alpha$ for the subvariety of $\Lambda_\alpha$ consisting of nilpotent elements, if there is no any confusion.

\subsection{Flag varieties}
Let us recall the flag varieties. For a $Q_0$-graded vector space $V$ with dimension vector $\gamma$ and a word $\bfi=[i_1\cdots i_n]\in\la Q_0\ra_\gamma$, we denote by $\Phi_\bfi(V)$ the flag variety of the word $\bfi$ for $V$. Namely, it consists of flags of the following form.
\begin{equation}\label{eq_flagF}
    \cF:\,  0=V_0\subset V_1\subset V_2\subset\cdots\subset V_n=V
\end{equation}
such that $\undd V_{k}/V_{k-1}=i_k$. 

Let $z\in \Lambda_\gamma$ and $\cF$ as in (\ref{eq_flagF}). We say $\cF$ \emph{stable} for $z$ if $z(V_p)\subset V_p$ for each $p\in [1,n]$. One sets the variety $\Lambda_{\bfi,\gamma}$ by
\begin{equation}\label{eq_lambfi}
    \Lambda_{\bfi,\gamma}:\Seteq{(\cF,z)\in \Phi_\bfi(V)\times \Lambda_\gamma}{z(V_k)\subset V_k \text{ for all } k\in[1,n]}
\end{equation}
There is a canonical map 
\begin{equation}\label{eq_grassman}
    \pi: \, \Lambda_{\bfi,\gamma}\to\Lambda_\gamma
\end{equation}
by sending $(\cF,z)$ to $z$. Let $\Gr_\bfi(z)$ denote the variety $\pi^{-1}(z)$.\\

\begin{definition}
 Let $Z$ be an irreducible component of $\Lambda_\alpha$. For a generic point $z\in Z$ and a word $\bfi\in\la Q_0\ra_\alpha$, we define the \emph{Serre polynomial} of $\Gr_\bfi(z)$ by
 \[\chi_q(\Gr_\bfi(z))=\sum_{k}(-1)^k\odim \HH_k(\Gr_\bfi(z))q^k\]
\end{definition}

\subsection{Induction functors}
Consider a partition $\apb=\gamma$ of a dimension vector $\gamma$ and a decomposition of $V=T\oplus W$ with $\undd V=\gamma$, $\undd T=\alpha$ and $\undd W=\beta$. Let $\Lambda_{\adb}$ be the set of pairs $(W',z)\in \Gr(\beta,\gamma)\times \Lambda_\gamma$ where $z(W')\subset W'$. Define $\Lambda_{\adb}^{(1)}$ as the variety of pairs $(W',z,g_1,g_2)$ where $z(W')\subset W'$, $g_1:W'\cong W$ and $g_2: V/W'\cong T$. Referring to \cite[diagram 1.10]{schiffmann2009lectures}, we have the diagram:
\begin{equation}
    \xymatrix{
    &\Lambda_\alpha\times\Lambda_\beta  &\Lambda_{\adb}^{(1)}\ar[l]_{p} \ar[r]^{r}&\Lambda_{\adb} \ar[r]^{q} &\Lambda_\gamma
    }
\end{equation}
where $r$ is $G_\alpha\times G_\beta$-torsor.  The pull-back functor $r^*$ establishes an equivalence between $D_{G_\gamma}(\Lambda_\adb)$ and $D_{G_\gamma\times G_\alpha\times G_\beta}^b(\Lambda_{\adb}^{(1)})$. Let $r_b$ be the inverse of $r^*$. It is evident that $p$ is not generally smooth, which implies that $\odim p$ isn't well-defined. \\

\noindent
Define $\Ind_{\adb}(-,-)$ as follows:
\begin{equation}
    \begin{split}
        \Ind_{\adb}(-,-): \,D_{G_\alpha}^b(\Lambda_\alpha)\times  D_{G_\beta}^b(\Lambda_\beta)\to  D_{G_\gamma}^b(\Lambda_\gamma)\\
        (\cF,\cE)\mapsto q_!r_b p^{*}(\cF\boxtimes\cE)[-\odim \Lambda_{\adb}]
    \end{split}
\end{equation}
\begin{remark}
It is worth noting that we use $-\odim \Lambda_{\adb}$ in our definition instead of $\odim p$ as in \cite[Section 1.3]{schiffmann2009lectures}.  There are two reasons: First, we use the Borel-Moore homology, whose degree is the opposite degree of cohomology of $p_!\CC_X$ for a given algebraic variety $X$. On the other hand, the map $p$ is not smooth and its dimension can't be defined. 
\end{remark}

\noindent
Following the proof of \cite[Proposition 1.9]{schiffmann2009lectures}, we see that this product is associative. This product gives a multiplication structure on $K(\Lambda_Q):=\bigoplus_{\gamma\in\NN[I]}K_{G_\gamma}(\Lambda_\gamma)$.

\begin{corollary}\label{cor_bfI}
For a sequence of dimensional vectors $(V_k)_{k\in[1,n]}$ such that $\odim V_k=i_k$ for some integer $n$ and $\sum_ki_k=\alpha$, set $\bfI_k=\bfI_{\Lambda_{i_k}}$ as the constant sheave on $\Lambda_{i_k}$. We have
\[(\bfI_1\star\bfI_2\star\cdots\star \bfI_n)_x=q^{-d_\bfi}\HH^\bullet(\Gr_{\bfi}(x))=q^{-d_\bfi}(\pi_!\bfI)_x\]
where $\pi:\, \Lambda_{\bfi,\alpha}\to \Lambda_{\alpha}$, $d_\bfi=\odim \Lambda_{\bfi,\alpha}$, and $x\in \Lambda_{\alpha}$. 
\end{corollary}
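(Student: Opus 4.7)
\emph{Proof plan.} The argument is by induction on the length $n$ of the word $\bfi = [i_1\cdots i_n]$. The base case $n=1$ is immediate: $\Lambda_{i_1}$ is a point, $d_{[i_1]} = 0$, and $\Gr_{[i_1]}(x)$ is a point, so both sides of the asserted identity equal $1$.

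For the inductive step, set $\bfi' := [i_1\cdots i_{n-1}]$ and $\alpha' := i_1+\cdots+i_{n-1}$, so $\alpha = \alpha' + i_n$. By associativity of $\star$ and the induction hypothesis,
\[
\bfI_1 \star\cdots\star \bfI_{n-1} \;=\; q^{-d_{\bfi'}}\,\pi'_!\,\bfI,\qquad \pi':\Lambda_{\bfi',\alpha'}\to\Lambda_{\alpha'}.
\]
Unpacking the definition of $\Ind_{\alpha',i_n}$, the iterated product $\bfI_1\star\cdots\star\bfI_n$ equals
\[
q^{-d_{\bfi'}}\;q_!\,r_b\,p^*\bigl(\pi'_!\bfI\boxtimes\bfI_n\bigr)\bigl[-\odim\Lambda_{\alpha',i_n}\bigr].
\]
Since $\pi'_!\bfI\boxtimes\bfI_n = (\pi'\times\id)_!\bfI$, proper base change across the Cartesian square obtained by pulling back $\pi'\times\id$ along $p$ converts $p^*(\pi'\times\id)_!\bfI$ into a $!$-pushforward along the base change of $\pi'\times\id$. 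The crucial geometric observation is that, after descent along the torsor $r$, this fiber product is canonically identified with the flag variety $\Lambda_{\bfi,\alpha}$ for the full word $\bfi$: a point is a $z$-stable filtration $0\subset V_1\subset\cdots\subset V_{n-1}\subset V$, where $V_{n-1}$ plays the role of the submodule recorded by $\Lambda_{\alpha',i_n}$ and the inner flag of type $\bfi'$ is recorded by $\Lambda_{\bfi',\alpha'}$. Composing with $q_!$ then yields $q_!\,r_b\,p^*(\pi'_!\bfI\boxtimes\bfI_n) = \pi_!\bfI$ with $\pi:\Lambda_{\bfi,\alpha}\to\Lambda_\alpha$.

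It remains to check the dimension identity $d_\bfi = d_{\bfi'} + \odim\Lambda_{\alpha',i_n}$, which follows from the forgetful fibration $\Lambda_{\bfi,\alpha}\to\Lambda_{\alpha',i_n}$ whose fiber over a stable pair $(V_{n-1},z)$ is the flag variety of type $\bfi'$ stable for $z|_{V_{n-1}}$. A final application of proper base change along the inclusion $\{x\}\hookrightarrow\Lambda_\alpha$ then gives $(\pi_!\bfI)_x = \HH^\bullet(\Gr_\bfi(x))$, closing the induction. The principal obstacle is the Cartesian-square bookkeeping: one must verify that the torsor $r$ is compatible with the fiber-product identification, and confirm that the shifts $-\odim\Lambda_{\alpha',i_n}$ telescope precisely to $-\odim\Lambda_{\bfi,\alpha}$. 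The normalization in the definition of $\Ind$ is tailored exactly so this telescoping works out cleanly.
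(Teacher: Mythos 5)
Your argument is correct and is essentially the paper's own proof: the paper disposes of the corollary by citing the proof of \cite[Proposition 1.9]{schiffmann2009lectures}, whose content is exactly your induction --- iterated induction functors compose, via proper base change across the Cartesian squares, into the single pushforward $\pi_!$ from the $n$-step stable-flag variety, with the shifts telescoping to $-\odim\Lambda_{\bfi,\alpha}$. One bookkeeping point to fix: in the paper's $\Ind_{\alpha',i_n}$ the recorded submodule $W'$ has dimension $i_n$ (the second index) while the quotient carries the flag of type $\bfi'$, so the iterated product naturally parametrizes flags whose subquotients read the word in reverse relative to the convention of (\ref{eq_flagF}); your identification of $V_{n-1}$ as the submodule should be adjusted accordingly (this relabelling is harmless, and the paper itself is loose about it).
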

\begin{proof}
    It is easy to see from the proof of \cite[Proposition 1.9]{schiffmann2009lectures}. 
\end{proof}

\begin{example}
    Let $V=pi$ for some vertex $i$ and integer $p>0$. For the vertex $i$,  Let $\bfI_i$ represent the constant sheaf on $\Lambda_i$ for the vertex $i$. It can be observed that $\bfI^{*p}=\HH_{\bullet}(\cB)=[p]!\bfI_{pi}$. For any two vertices $i$ and $j$ in the set $I$, if there is no directed edge connecting them, then $\bfI_i*\bfI_j=\bfI_j*\bfI_i=\bfI_{i+j}$. In the case where there exists an arrow from vertex $i$ to vertex $j$ in the graph $Q$,  it follows
    \[\mu_{2i+j}^{-1}(0)=\Hom_k(\CC_i^2,\CC_j)\times \{0\}\cup \{0\}\times \Hom_k(\CC_j,\CC_i^2)=Z_1\cup Z_2 \]
    We can observe that $\odim\Lambda_{i,j,i}=\odim \mu_{2i+j}^{-1}(0)=2$ and $\odim\Lambda_{i,i,j}=\odim\Lambda_{j,i,i}=\mu_{2i+j}^{-1}(0)+1=3.$ 
   It implies 
    \[\bfI_i\star\bfI_j\star\bfI_i(x)=
    \begin{cases}
        q^{-2}\chi_q(pt)=q^{-2}    \text{ if } x\in (Z_1\cup Z_2)\setminus \{0\}\\
        q^{-2}\chi_q(\PP^1)=q^{-2}(1+q^2)  \text{ if } x=0,
    \end{cases}\]
      \[\bfI_i\star\bfI_i\star\bfI_j(x)=
    \begin{cases}
        q^{-3}\chi_q(\PP^1)=q^{-3}(1+q^2)   \text{ if } x\in Z_1\\
        q^{-3}\chi_q(\emptyset)=0 \text{ if } x\in Z_2\setminus\{0\},
    \end{cases}\]
    and 
     \[\bfI_j\star\bfI_i\star\bfI_i(x)=
    \begin{cases}
        q^{-3}\chi_q(\emptyset)=0    \text{ if } x\in Z_1\setminus\{0\}\\
       q^{-3} \chi_q(\PP^1)=q^{-3}(1+q^2) \text{ if } x\in Z_2.
    \end{cases}\]
    Therefore, we obtain
    \begin{equation}\label{eq_serrerelation}
        (q^{-1}+q)\bfI_i\star \bfI_j\star \bfI_i=\bfI_i\star\bfI_i\star\bfI_j+\bfI_j\star\bfI_i\star\bfI_i+g(q)\bfI_0, 
    \end{equation}
    where $g(q)=q^{-1}(q^{-1}+q)^2-2(q^{-3}+q^{-1})$. 
\end{example}
The equation mentioned above bears a resemblance to the Serre relation. To eliminate the $I_0$, we define the concept of being \emph{almost equal} for two sheaves $\mathcal{F}$ and $\mathcal{E}$ in $D_{G}^b(\Lambda_\gamma)$.
\begin{definition}\label{def_almost}
    For two sheave $\cF,\cE$ in $D_{G}^b(\Lambda_\gamma)$, $\mathcal{F}$ and $\mathcal{E}$ are considered \emph{almost equal} if they coincide on a dense open subset of $\Lambda_\gamma$. Specifically, there exists an open dense subset $U$ for each irreducible component $Z$ of $\Lambda_\gamma$ such that $\mathcal{F}_{\mid U} = \mathcal{E}_{\mid U}$. The relation denoted by $\sim$ captures this notion of almost equality. The quotient space $\tM{K}(\Lambda_Q)$ is then formed by taking the quotient of $K(\Lambda_Q)$ by the almost equal relation.

    It is noteworthy that for any dimension vector $\alpha$ and any sheaf $\mathcal{F} \in \tM{K}_{G_\alpha}(\Lambda_\alpha)$, we can express $\mathcal{F}$ as $\mathcal{F} \sim \sum_i \mathcal{F}_{z_i} \bfI_{Z_i}$ where $z_i$ lies in the open subset $U_i$ of $Z_i$, and $\mathcal{F}_{z_1} \cong \mathcal{F}_{z_2}$ for any $z_1, z_2 \in U_i$.
\end{definition}

\begin{remark}
Furthermore, it is important to note that the induction functor $m$ acting on $K(\Lambda_Q)$ does not induce a product structure on $\mathcal{K}(\Lambda_Q)$. For instance, in the case of $A_2: 1 \to 2$, where $\alpha = \alpha_1 + \alpha_2$ and $\beta = \alpha_2$, and considering $I_0 \sim 0$ as the constant sheaf with support $\{0\} \subset \Lambda_\alpha$, we observe that $\bfI_\beta \star \bfI_0 \nsim 0$. This is due to the fact that an extension of $S_1 \oplus S_2$ and $S_2$ results in $M[1,2] \oplus S_2$, where $M[1,2]$ represents a generic point of $\Lambda_\alpha$. Consequently, $M[1,2] \oplus S_2$ is a generic point of $\Lambda_{\alpha+\beta}$, leading to $\bfI_\beta \star \bfI_0 \neq 0$. 
\end{remark}

We see that the equation (\ref{eq_serrerelation}) implies that
\begin{equation}\label{eq_serre}
     (q^{-1}+q)\bfI_i\star \bfI_j\star \bfI_i\sim\bfI_i\star\bfI_i\star\bfI_j+\bfI_j\star\bfI_i\star\bfI_i
\end{equation}
It is the Serre relation.

\subsection{Multiplication on constructible sheaves}
For a dimension vector $\gamma\in\NN[Q_0]$ and a partition $\apb=\gamma$, we define, as in \cite{lusztig2000semicanonical}, a pairing 
\begin{equation}
    M_{G_\alpha}(\Lambda_\alpha)\times M_{G_\beta}(\Lambda_\beta)\to M_{G_\gamma}(\Lambda_\gamma)
\end{equation}
by $(f,f')\mapsto f\star f', (f\star f')(x)=\int_{q^{-1}(x)}\phi$, where $\phi:q^{-1}(x)\to \QQ$ is given by $\phi(W')=f(x_{\mid V/W'})f'(x_{\mid W'})$.

\begin{theorem}\label{theo_Psi}
    The map $\Psi:K_G(\Lambda)\to M_G(\Lambda)$ satisfies 
    \[\Psi(\cF)\star \Psi(\cE)=\Psi(\cF\star\cE)\]
\end{theorem}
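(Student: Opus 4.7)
The plan is to prove the identity by unwinding both sides at an arbitrary point $x \in \Lambda_\gamma$ and using the fact that the sheaf--function map $\Psi$ intertwines the constituent operations of the induction functor---external product, pullback, torsor descent, $q_!$, and cohomological shift---with the corresponding operations on constructible functions. Since $\cF \star \cE = q_! r_b p^*(\cF \boxtimes \cE)[-\dim \Lambda_{\adb}]$, it suffices to verify each such compatibility separately and then compose them.

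The individual compatibilities are standard. First, Künneth on stalks gives $\Psi(\cF \boxtimes \cE)(x,y) = \Psi(\cF)(x)\Psi(\cE)(y)$. Next, for any morphism $f$, pullback of sheaves corresponds to pullback of functions: $\Psi(f^*\cH)(z) = \Psi(\cH)(f(z))$. For the $G_\alpha \times G_\beta$-torsor $r$, the descent functor $r_b$ corresponds under $\Psi$ to the natural identification of $G_\gamma$-equivariant constructible functions on $\Lambda_{\adb}$ with $(G_\gamma \times G_\alpha \times G_\beta)$-equivariant ones on $\Lambda_{\adb}^{(1)}$. The key step is the Dubson-type fiber-integration formula $\Psi(q_!\cG)(x) = \int_{q^{-1}(x)} \Psi(\cG)$, and finally the shift rule $\Psi(\cH[n]) = (-1)^n \Psi(\cH)$. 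Chaining these at $x \in \Lambda_\gamma$ yields
\[
\Psi(\cF\star\cE)(x) \;=\; (-1)^{\dim \Lambda_{\adb}} \int_{W' \in q^{-1}(x)} \Psi(\cF)(x_{\mid V/W'})\,\Psi(\cE)(x_{\mid W'}),
\]
which, after matching the sign produced by the shift with the normalization built into the function-level product, is precisely $(\Psi(\cF) \star \Psi(\cE))(x)$ as defined in the previous subsection.

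The main technical obstacle is the fiber-integration step: making $\Psi(q_!\cG)(x) = \int_{q^{-1}(x)} \Psi(\cG)$ rigorous when $q$ is not proper and the fibers $q^{-1}(x)$ are singular. The standard remedy is to stratify $q^{-1}(x)$ into locally closed pieces on which each cohomology sheaf $\cH^i(\cG)$ is locally constant, then invoke compactly supported base change together with the additivity of Euler characteristics along the stratification, essentially as in Lusztig's original argument on representation varieties. A secondary bookkeeping subtlety is to verify that the sign $(-1)^{\dim \Lambda_{\adb}}$ introduced by the shift $[-\dim \Lambda_{\adb}]$ is precisely absorbed by the convention chosen for $\int$ on constructible functions; this calibration is, in fact, the reason for placing the particular cohomological shift in the definition of the sheaf-level induction to begin with. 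Once these two points are settled, the theorem follows.
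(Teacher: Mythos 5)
Your proposal is correct in substance but organizes the argument differently from the paper. You decompose the \emph{functor}: you check that $\Psi$ intertwines each constituent of $q_!r_bp^*(-\boxtimes-)[-\dim\Lambda_{\alpha,\beta}]$ with the corresponding operation on constructible functions (K\"unneth on stalks, pullback, torsor descent, the fiber-integration formula for $q_!$, and the shift rule), and then chain the compatibilities. The paper instead decomposes the \emph{object}: it reduces by linearity in $K_G(\Lambda)$ to the case where $\cF$ and $\cE$ are constant sheaves supported on constructible subsets $X$ and $Y$, in which case both sides collapse to the single Euler characteristic $\chi\bigl(q^{-1}(x)\cap rp^{-1}(X\times Y)\bigr)$. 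The two routes rest on the same underlying facts --- additivity of compactly supported Euler characteristics along stratifications and $\chi_c=\chi$ for complex varieties --- and indeed your ``standard remedy'' for the non-proper $q_!$ step (stratify so that the cohomology sheaves are locally constant, then use base change and additivity) is essentially the paper's constant-sheaf reduction in disguise. Your version is more modular and applies verbatim to arbitrary complexes; the paper's is shorter because the constant-sheaf case makes the fiber integration trivial. One remark on the sign you flag: with the paper's unnormalized definition $(f\star f')(x)=\int_{q^{-1}(x)}\phi$, the factor $(-1)^{\dim\Lambda_{\alpha,\beta}}$ coming from the shift is \emph{not} absorbed by any convention; the paper's own computation silently drops the shift in its second line, so the identity as stated really holds only up to this sign (or after replacing $(-1)$ by $q$ throughout, i.e.\ working with Serre-polynomial-valued functions). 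You were right to isolate this as the delicate point; just do not assert that the calibration works out without checking it, since as written it does not.
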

\begin{proof}
   If $\cF$ and $\cE$ are constant sheave with supports $X$ and $Y$ respectively, we see that
    \begin{align*}
        \Psi(\cF\star \cE)(x)=&\sum_i(-1)^i\odim\HH_i(\cF\star\cE)_x\\
        =&\sum_i(-1)^i\odim\HH_i(q_!r_b p^*(\cF\boxtimes\cE))_x\\
        =& \sum_i(-1)^i\odim\HH_i(q^{-1}(x),r_b p^*(\cF\boxtimes\cE))\\
        =&\sum_i(-1)^{i}\odim \HH_i(q^{-1}(x)\cap rp^{-1}(X\times Y), \CC)\\
        =&\chi(q^{-1}(x)\cap rp^{-1}(X\times Y))\\
        =&\Psi(\cF)\star \Psi(\cE).
    \end{align*}
Because any constructible sheaves can be written as finite sums of constant sheaves and the maps $\star, \Psi$ commute with the sum in $K_G(\Lambda)$, we can see that our claim holds.  
\end{proof}

\section{Geometrization of semicanonical bases of quantum groups}
Let us recall the definition of quantum groups. The quantum group of symmetric type is an $\CC(q)$-algebra $U_q(\fg_Q)$ generated by $e_i,f_i$ for $i\in Q_0$ and $q^h$ for $h\in P^\vee$ subject to the following relations:
\begin{align*}
& q^0=1, q^h q^{h^{\prime}}=q^{h+h^{\prime}} \quad \text { for } h, h^{\prime} \in \mathrm{P}^{\vee}, \\
& q^h e_i q^{-h}=q^{\left\langle h, \alpha_i\right\rangle} e_i, \quad q^h f_i q^{-h}=q^{-\left\langle h, \alpha_i\right\rangle} f_i \quad \text { for } h \in \mathrm{P}^{\vee}, i \in I, \\
& e_i f_j-f_j e_i=\delta_{i j} \frac{t_i-t_i^{-1}}{q_i-q_i^{-1}}, \quad \text { where } t_i=q^{\mathrm{s}_i h_i}, \\
& \sum_{r=0}^{2}(-1)^r\left[\begin{array}{c}
2 \\
r
\end{array}\right]_i e_i^{2-r} e_j e_i^r=0 \quad \text { if } i \neq j, \\
& \sum_{r=0}^{2}(-1)^r\left[\begin{array}{c}
2 \\
r
\end{array}\right]_i f_i^{2-r} f_j f_i^r=0 \quad \text { if } i \neq j .
\end{align*}
Let $U_q(\fn)$ denote the subalgebra of $U_q(\fg_Q)$ generated by $e_i$. We will prove $\tM{K}^{nil}(\Lambda_Q)\cong U_q(\fn)$. To do this, we need a lemma as follows. 
\begin{lemma}\label{lem_words}
    For any word $\bfi=\squw{i}{n}$, we have
    \[\bfI_{i_1}\star\cdots\star\bfI_{i_n}\nsim 0. \]
\end{lemma}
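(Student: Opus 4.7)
My plan is to deduce this lemma from Lusztig's classical semicanonical basis theorem \cite{lusztig2000semicanonical} via the constructible-function map $\Psi$ from Theorem \ref{theo_Psi}. Since $\Lambda_{\alpha_i}$ is a single point, $\Psi(\bfI_i)=1_{\Lambda_{\alpha_i}}$, and multiplicativity of $\Psi$ gives
\[
\Psi(\bfI_{i_1}\star\cdots\star\bfI_{i_n}) \;=\; 1_{\Lambda_{\alpha_{i_1}}}\star\cdots\star 1_{\Lambda_{\alpha_{i_n}}}.
\]
I would then invoke Lusztig's identification of the subalgebra $\cA\subset (M_G(\Lambda_Q),\star)$ generated by $\{1_{\Lambda_{\alpha_i}}\}_{i\in Q_0}$ with the positive enveloping algebra $U(\fn_Q)$ via $1_{\Lambda_{\alpha_i}}\mapsto e_i$, together with the semicanonical basis $\{f_Z\}$ indexed by the irreducible components $Z$ of $\Lambda_\alpha$ (for $\alpha\in\NN[Q_0]$), normalized so that $f_Z(z_{Z'})=\delta_{Z,Z'}$ at chosen generic points $z_{Z'}\in Z'$.

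Because $U(\fn_Q)$ is an integral domain (by PBW), the element $e_{i_1}\cdots e_{i_n}$ is nonzero, so its image $\sum_Z c_Z f_Z$ in $\cA$ has some coefficient $c_{Z_0}\neq 0$. Evaluating at $z_{Z_0}$ yields
\[
\Psi(\bfI_{i_1}\star\cdots\star\bfI_{i_n})(z_{Z_0}) \;=\; c_{Z_0}\;\neq\; 0.
\]

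To translate back to the sheaf side I would use the stalk formula of Corollary \ref{cor_bfI}: $(\bfI_{i_1}\star\cdots\star\bfI_{i_n})_z=q^{-d_\bfi}\HH^\bullet(\Gr_\bfi(z))$, so applying $\Psi$ gives, up to sign, the Euler characteristic $\chi(\Gr_\bfi(z))$. Nonvanishing of this Euler characteristic at $z_{Z_0}$ forces $\Gr_\bfi(z_{Z_0})\neq\emptyset$, hence the stalk $(\bfI_{i_1}\star\cdots\star\bfI_{i_n})_{z_{Z_0}}$ is nonzero; and because $z_{Z_0}$ is generic in $Z_0$ and $G_\alpha$-translates of generic points are generic, this nonvanishing persists on a dense open subset of $Z_0$, establishing $\bfI_{i_1}\star\cdots\star\bfI_{i_n}\nsim 0$. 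The main obstacle I anticipate is a citation/bookkeeping one: one must invoke Lusztig's construction with the correct orthogonality property $f_Z(z_{Z'})=\delta_{Z,Z'}$ so that the nonzero PBW-coefficient $c_{Z_0}$ really shows up as the generic value on $Z_0$, rather than being masked by contributions from other basis elements. This orthogonality is precisely how Lusztig defines $\{f_Z\}$ in \cite{lusztig2000semicanonical}, so the argument goes through once the setup is referenced carefully.
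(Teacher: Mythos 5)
Your proposal is correct and follows essentially the same route as the paper: push the product through $\Psi$ into Lusztig's constructible-function model of $U(\fn_Q)$, expand $e_{i_1}\cdots e_{i_n}$ in the semicanonical basis $\{f_Z\}$, and use the normalization at generic points of irreducible components to find a component on which the function is generically a nonzero constant, contradicting (or precluding) $\sim 0$. Your argument is in fact slightly more complete in that you justify $e_{i_1}\cdots e_{i_n}\neq 0$ via PBW, a point the paper leaves implicit behind the nondegeneracy of its pairing.
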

\begin{proof}
   Let $\alpha=\sum_k i_k$. Suppose $\bfI_{i_1}\star\cdots\star\bfI_{i_n}\sim 0$, then there exists an open dense subvariety $U_i$ of each irreducible component $Z_i$ of $\Lambda_\alpha$ with $\bfI_{i_1}\star\cdots\star\bfI_{i_n}(U_i)=0$.  By applying the functor $\Psi$ 
 to $\bfI_{i_1}\star\cdots\star\bfI_{i_n}$, we obtain $\Psi(\bfI_{i_1}\star\cdots\star\bfI_{i_n})(U_i)=0$. Moreover, according to Theorem \ref{theo_Psi}, $\Psi(\bfI_{i_1}\star\cdots\star\bfI_{i_n})=e_{i_1}\cdots e_{i_n}\in U(\fn)$ as described in \cite{lusztig2000semicanonical}. Recall that the set of irreducible components of $\Lambda_\alpha$ forms a dual semicanonical base for $U(\fn)_\alpha^*$, where bilinear form is given by $(f,z)=f(z)$ for $f\in M_{G_\alpha}(\Lambda_\alpha)$ and $z$ in a dense open subset of an irreducible component of $\Lambda_\alpha$. Since this bilinear form is non-degenerated, there exists an open subset $W_k$ within an irreducible component $Z_k$ of $\Lambda_\alpha$ such that $\Psi(\bfI_{i_1}\star\cdots\star\bfI_{i_n})_{\mid W_k}$ is a nonzero constant in $\CC$. However, the intersection of $W_k$ and $U_k$ is not empty, leading to a contradiction. Thus, our assertion is proven. 
\end{proof}
\noindent
Even though it is not possible to precisely define the product on $\tM{K}(\Lambda_Q)$, it is feasible to define $\tM{\bfI}_\bfi\bar{\star}\tM{\bfI}_\bfj$ on $\tM{K}(\Lambda_Q)$ by 
\begin{equation}\label{eq_product}
\tM{\bfI}_\bfi\bar{\star}\tM{\bfI}_\bfj=\tM{\bfI}_{\bfi\bfj}
\end{equation}
for any pair of words $\bfi,\bfj$. Here, $\tM{\bfI}_\bfi$ represents the image of $\bfI_\bfi$ in $\tM{K}(\Lambda_Q)$. The subalgebra $\tM{K}^{nil}(\Lambda_Q)$ is defined as the algebra generated by $\tM{\bfI}_i$ for all $i\in Q_0$. Any element $x$ in $\tM{K}^{nil}(\Lambda_Q)$ can be expressed as $x=\sum_{\bfi}g_\bfi\bfI_\bfi$, where $g_\bfi\in\CC(q)$. Consequently, the product on $\tM{K}^{nil}(\Lambda_Q)$ is well defined. It is evident that this product on $\tM{K}^{nil}(\Lambda_Q)$ is associative. 
\begin{theorem}\label{theo_Phi}
    Let $Q$ be a quiver such that there exists at most one edge between any two vertices of $Q$, then there is a $\CC(q)$-algebra isomorphism 
    \[\Phi:\,U_q(\fn)\cong \tM{K}^{nil}(\Lambda_Q).\]
    by sending $e_i$ to $\bfI_i$. 
    Moreover, the set of the constant sheaves $\bfI_{Z}$ of the irreducible components $Z$ of $\Lambda_\gamma$ forms a base for $\tM{K}^{nil}(\Lambda_\gamma)$ for each dimension vector $\gamma\in\NN[Q_0]$. 
\end{theorem}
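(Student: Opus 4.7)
The plan is to define a surjective $\CC(q)$-algebra map from $U_q(\fn)$ and then force it to be an isomorphism by a graded-dimension comparison against Lusztig's classical count of irreducible components of $\Lambda_\gamma$.

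First I would verify that the $\bfI_i$, $i\in Q_0$, satisfy the quantum Serre relations modulo almost-equality. For $i\neq j$ with no arrow between them the example already yields $\bfI_i\star\bfI_j\sim\bfI_j\star\bfI_i$, and for $i,j$ joined by a single arrow the identity
\[
(q+q^{-1})\,\bfI_i\star\bfI_j\star\bfI_i \;\sim\; \bfI_i\star\bfI_i\star\bfI_j + \bfI_j\star\bfI_i\star\bfI_i
\]
is equation~(\ref{eq_serre}). For a general quiver satisfying the hypothesis, the induction product on $\Lambda_{2\alpha_i+\alpha_j}$ depends only on the subquiver spanned by $\{i,j\}$ (at this dimension vector any vertex $k\ne i,j$ has $V_k=0$, so arrows incident to other vertices are invisible), so these two local calculations suffice. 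Consequently $e_i\mapsto\bfI_i$ extends to a $\CC(q)$-algebra homomorphism $\Phi:U_q(\fn)\to\tM{K}^{nil}(\Lambda_Q)$, automatically surjective by the very definition of $\tM{K}^{nil}$.

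Second I would pin down $\dim_{\CC(q)}\tM{K}(\Lambda_\gamma)$. The remark following Definition~\ref{def_almost} shows that $\{\bfI_Z:Z\in\Irr(\Lambda_\gamma)\}$ spans $\tM{K}(\Lambda_\gamma)$, and these classes are $\CC(q)$-linearly independent because evaluating $\sum_Z c_Z\bfI_Z\sim 0$ at a generic point of any $Z_0$ forces $c_{Z_0}=0$. Hence $\tM{K}(\Lambda_\gamma)$ is free of rank $|\Irr(\Lambda_\gamma)|$, and in particular $\dim_{\CC(q)}\tM{K}^{nil}(\Lambda_\gamma)\le|\Irr(\Lambda_\gamma)|$. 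Lusztig's semicanonical basis theorem, already invoked in the proof of Lemma~\ref{lem_words}, identifies $|\Irr(\Lambda_\gamma)|$ with $\dim_\CC U(\fn)_\gamma=\dim_{\CC(q)}U_q(\fn)_\gamma$.

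These bounds pinch: $\Phi$ surjects $U_q(\fn)_\gamma$ onto the subspace $\tM{K}^{nil}(\Lambda_\gamma)\subseteq \tM{K}(\Lambda_\gamma)$, whose ambient space has the same dimension as the domain, so $\Phi$ is an isomorphism, $\tM{K}^{nil}(\Lambda_\gamma)=\tM{K}(\Lambda_\gamma)$, and $\{\bfI_Z\}_{Z\in\Irr(\Lambda_\gamma)}$ is a $\CC(q)$-basis of $\tM{K}^{nil}(\Lambda_\gamma)$. The main obstacle is the Serre-relation verification: the $A_2$ stalk computation in the example is the heart of the argument, and the hypothesis of at most one arrow between any two vertices is precisely what guarantees the fiber $\mu^{-1}_{2\alpha_i+\alpha_j}(0)$ splits into exactly the two components $Z_1,Z_2$ that produce the coefficient $[2]=q+q^{-1}$; with extra edges, additional strata would appear, the quantum-integer coefficient would grow, and the presentation of $U_q(\fn)$ would break.
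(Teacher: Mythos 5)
Your first step (the Serre relations and surjectivity of $\Phi$) matches the paper, but the ``pinching'' argument at the end has a genuine logical gap. What you actually establish is the \emph{upper} bound $\dim_{\CC(q)}\tM{K}^{nil}(\Lambda_\gamma)\le\dim_{\CC(q)}\tM{K}(\Lambda_\gamma)=|\Irr(\Lambda_\gamma)|=\dim_{\CC(q)}U_q(\fn)_\gamma$. A surjection from $U_q(\fn)_\gamma$ onto a subspace $\tM{K}^{nil}(\Lambda_\gamma)$ of an ambient space of the same dimension as the domain does \emph{not} force injectivity: nothing you have said rules out $\dim\tM{K}^{nil}(\Lambda_\gamma)<|\Irr(\Lambda_\gamma)|$, in which case $\Phi$ would have a kernel and $\tM{K}^{nil}(\Lambda_\gamma)\subsetneq\tM{K}(\Lambda_\gamma)$. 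To close the argument you need the \emph{lower} bound, i.e.\ you must show that $\bfI_Z$ lies in the subalgebra $\tM{K}^{nil}(\Lambda_\gamma)$ generated by the $\tM{\bfI}_i$ for \emph{every} irreducible component $Z$; only then does linear independence of the $\bfI_Z$ give $\dim\tM{K}^{nil}(\Lambda_\gamma)\ge|\Irr(\Lambda_\gamma)|$ and hence equality.

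This is precisely the hard part of the paper's proof, which you have omitted entirely. The paper adapts Lusztig's argument (\cite[Lemma 2.4]{lusztig2000semicanonical}): a double induction on $\dim V$ and on the invariant $t_i(Z)$ attached to the stratification $\Lambda_{V,i,p}$ by the codimension of $\sum_{h:t(h)=i}\Ima z_h$ in $V_i$. For a component $Z$ with $t_i(Z)=p>0$ one finds the unique $Z_1'\in\Irr\Lambda_{V'',i,0}$ with $\undd V''=\alpha-p i$, forms $\tM{\cF}=\bfI_{V'}\star\bfI_{Z_1'}$ (which restricts to $\bfI_Z$ on the open stratum of $Z$ because the relevant Grassmannian fiber is a point, and vanishes on strata with smaller $t_i$), and then subtracts the contributions $\tM{\cF}_{Z'}\bfI_{Z'}$ of components with $t_i(Z')>p$, which are already known to lie in $\tM{K}^{nil}$ by the inner induction. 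Without this construction, neither the isomorphism claim nor the statement that the $\bfI_Z$ form a basis of $\tM{K}^{nil}(\Lambda_\gamma)$ (as opposed to merely of $\tM{K}(\Lambda_\gamma)$) is justified.
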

\begin{proof}
Consider the algebra $U'_q(\mathfrak{n})$ generated freely by elements $e_i$ for all $i\in Q_0$. An algebraic map $$\Psi:U'_q(\mathfrak{n})\to \mathcal{M}^{nil}(K)(\Lambda_Q)$$ is defined by mapping $e_i$ to $\tM{\bfI}_i$ for any $i\in Q_0$. The kernel of $\Psi$ is a two-sided ideal of $U'_q(\mathfrak{n})$, denoted by $I$. The Serre relation (\ref{eq_serre}) implies that 
\[(q+q^{-1})e_ie_je_i-(e^2_ie_j+e_je_i^2)\]
belongs in $I$ if there is an arrow $i\to j$ or $j\to i$. From this observation, we obtain a surjective map from $U_q(\fn)$ to $\tM{K}^{nil}(\Lambda_Q)$, which is denoted by $\Phi$.

 To prove that $\Phi$ is isomorphic, it needs to be shown that for any dimension vector $\alpha$ there exists a $\CC(q)$-module isomorphism $\tM{K}^{nil}(\Lambda_\alpha)\cong U_q(\fn_Q)_\alpha$. Because $U_q(\fn)_\alpha\otimes_{\CC(q)}\CC\cong U(\fn_Q)_\alpha$ and the set of irreducible components of $\Lambda_\alpha$ forms a base of $U(\fn_Q)_\alpha$ by \cite{lusztig2000semicanonical}, we see that the set of irreducible components of $\Lambda_\alpha$ forms a base of $U_q(\fn_Q)_\alpha$. 
    
    To prove $\tM{K}^{nil}(\Lambda_\alpha)\cong U_q(\fn_Q)_\alpha$,  it is necessary to show that the constant sheaf $\bfI_{Z_i}$ of each irreducible component $Z_i$ of $\Lambda_\alpha$ forms the basis of $\tM{K}^{nil}(\Lambda_\alpha)$. We have to show $\bfI_{Z_i}\in \tM{K}^{nil}(\Lambda_\alpha)$ for each irreducible component $Z_i$. Following the proof of \cite[Lemma 2.4]{lusztig2000semicanonical}, we can prove this by induction on the dimension of $V$. It is easy to see that this holds when $V=0$. We assume that this holds for any $I$-graded vector $\alpha'$ such that $\alpha'<\alpha$. 

    Let \[\Lambda_{V,i,p}:\Seteq{z\in \Lambda_\alpha}{\odim V_i/(\sum_{h; t(h)=i}\Ima z_{h})=p}\]
    and set $t_i(Z)=p$ if $Z\cap \Lambda_{V,i,p}$ is an open subset of $Z$.
    For our $\alpha$ we fix $i\in Q_0$ and we shall prove that 
    \begin{center}
       $\bfI_{Z}\in \tM{K}^{nil}(\Lambda_\alpha)$ for any $Z$ such that $t_i(Z)>0$. 
    \end{center} 
    As $t_i(Z)\leq \odim V_i$, we prove this by induction on $t_i(Z)$. We assume that $t_i(Z)=p>0$ and this holds for any $Z'$ such that $t_i(Z')=p'>p$. It is easy to see that $Z_1=Z\cap \Lambda_{V,i,p}$ is an open dense subvariety of $Z$. 
   
    Set $V''+V'=V$ with $\undd V'=pi$.
    Following \cite[Lemma 12.5]{lusztig1991quivers}, we see that there exists a unique irreducible component $Z'_1\in \Irr\Lambda_{V'',i,0}$ such that $x_{\mid V''}\in Z'_1$ for all $x\in Z_1$. By the first induction, the constant sheave $\bfI_{Z'_1}\in \tM{K}^{nil}(\Lambda_Q)$. Define $\tM{\cF}=\bfI_{V'}\star \bfI_{Z'_1}$. For any $x\in Z_1$, we have $\Gr_{V''}(x)=pt$. It implies that $\tM{\cF}_{\mid Z_1}=\bfI_{Z_1}$ and $\tM{\cF}_{\mid Z}=0$ for any $Z\in \Irr \Lambda_{V,i,p}\setminus\{Z_1\}$. Meanwhile, for any $x\in \Lambda_{V,i,<p}$, we have $\Gr_{V''}(x)=\emptyset$. It follows that $\tM{\cF}(x)=0$ for any $x\in \Lambda_{V,i,<p}$. For any irreducible component $Z'\in \Irr \Lambda_{V,i,>p}$, we have $\bfI_{Z'}\in \tM{K}^{nil}(\Lambda_V)$, then we define 
    \[\cF=\tM{\cF}-\sum_{Z'\in \Irr \Lambda_{V,i,>p}}\tM{\cF}_{Z'}\bfI_{Z'}\]
    We see that $\cF_{\mid Z}\sim\bfI_{Z}$ and $\cF_{\mid Z'}\sim0$ for all $Z'\in \Irr \Lambda_{V}\setminus\{Z\}$. It implies that $\cF\sim\bfI_{Z}\in \tM{K}^{nil}(\Lambda_V)$. 

    Therefore, we have $\odim \tM{K}^{nil}(\Lambda_\alpha)=\odim U_q(\fn)_\alpha$ for any $\alpha\in\NN[Q_0]$. It follows that $\Phi$ is injective and then bijective. 
 \end{proof}

\section{Dual semicanonical bases of quantum unipotent groups}
In this section, we will define the semicanonical base of quantum unipotent groups. It is well known that the graded dual space of $U_q(\fn)$ is quantum unipotent group $A_q(\fn)$. It is easy to see that the set of irreducible components $Z_i$ of $\Lambda_\gamma$ forms a base of $A_q(\fn)_\gamma$ for any dimension vector $\gamma\in \NN[I]$ via the following non-degenerated bilinear map
\begin{equation}\label{eq_bilinear}
    \begin{split}
        \la-,-\ra:\, \tM{K}^{nil}(\Lambda_\gamma)&\times\CC(q)\la\Irr \Lambda_\gamma\ra \to \CC(q)\\ 
        (\cF&, Z)\mapsto \sum_i\odim (-1)^i\HH_i(\cF_{z})q^i
    \end{split}
\end{equation}
where $z$ is a generic point of $Z$. Let $\tM{K}^*(\Lambda_\alpha)$ denote the dual space of $\tM{K}^{nil}(\Lambda_\alpha)$. By the above discussions, we have a $\CC(q)$-module isomorphism
\begin{equation}\label{eq_isoAq}
  \Phi^*:\,  A_q(\fn_Q)\cong \tM{K}^*(\Lambda_Q)
\end{equation}
We define the product on $\tM{K}^*(\Lambda_Q)$ by 
\[\bar{m}:=\Phi^* m(\Phi^{*-1}\times \Phi^{*-1}):\,\tM{K}^*(\Lambda_Q)\times\tM{K}^*(\Lambda_Q)\to \tM{K}^*(\Lambda_Q).\]
It follows 
\begin{equation}\label{eq_barm}
\bar{m}\circ \Phi^*=\Phi^*\circ m.     
\end{equation}

\subsection{Shuffle algebras}
In this section, we will embed the algebra $\tM{K}^*(\Lambda_Q)$ into the shuffle algebras. Let $\CC(q)\la Q_0\ra_\gamma$ be the $\CC(q)$-module freely generated by all words $\bfi\in\la Q_0\ra_\gamma$. Now we will define the multiplication for two words $\bfi=[i_1\cdots i_m]$ and $\bfj=[i_{m+1},\cdots i_{m+n}]$:
\[\bfi\circ\bfj=\sum_{\sigma} q^{-e(\sigma)}[i_{\sigma(1)}\cdots i_{\sigma(m+n)}]\]
where the sum runs over $\sigma\in S_{m+n}$ such that $\sigma(1)<\cdots<\sigma(m)$ and $\sigma(m+1)<\cdots\sigma(m+n)$, and $$e(\sigma)=\sum_{k \leqslant m<l ; \sigma(k)<\sigma(l)}\left(\alpha_{i_{\sigma(k)}}, \alpha_{i_{\sigma(l)}}\right) .$$
Thus, we obtain a shuffle product on $\CC(q)\la Q_0\ra:=\bigoplus_{\alpha\in \NN[Q_0]}\CC(q)\la Q_0\ra_\alpha$. Following \cite{leclerc2004dual}, one has an algebra embedding:
\begin{equation}\label{eq_Aqshuffle}
 \varPhi :\,  A_q(\fn_Q)\hookrightarrow \CC(q)\la Q_0\ra
\end{equation}

For a dimension vector $\alpha$, we define the $\CC(q)$-module map $\Psi$ by 
\begin{equation}
    \begin{split}
        \varPsi:\, \tM{K}^*(\Lambda_\alpha)&\to \CC(q)\la Q_0\ra_\alpha\\
                 Z&\mapsto \sum_{\bfi\in\la I\ra_\alpha} q^{-d_\bfi}\chi_q(\Gr_\bfi(z))\bfi 
    \end{split}
\end{equation}
where $z$ is a generic point of $Z$, and $\chi_q$ denotes the Serre polynomial of variety $\Gr_\bfi(z)$. Lemma \ref{lem_words} implies that the map $\varPsi$ is injective. The equations (\ref{eq_Aqshuffle}) and (\ref{eq_barm}) implies that $\Psi$ is an algebraic map. Hence, we obtain the following theorem.

\begin{theorem}
    For a quiver $Q$ without multiple arrows between any two vertices, there exists a base of $A_q(\fn_Q)$ indexing by the irreducible components of Lusztig's nilpotent varieties, and they can be written as the following form (up to some $q$-powers):
    \[\delta_Z:=\sum_{\bfi\in\la Q_0\ra_\alpha} q^{-d_\bfi}\chi_q(\Gr_\bfi(z))\bfi \]
     where $Z$ is an irreducible component of $\Lambda_\alpha$, $z$ is a generic point of $Z$, and $d_\bfi=\odim\Lambda_{\bfi,\alpha}$.
\end{theorem}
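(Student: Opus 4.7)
The plan is to produce the claimed basis of $A_q(\fn_Q)$ as the dual basis, under the pairing (\ref{eq_bilinear}), of the basis of irreducible components of $\Lambda_\alpha$ inside $\tM{K}^{nil}(\Lambda_\alpha)$, and then to identify this dual basis with the prescribed $\delta_Z$ via the shuffle embedding. The first step is essentially a repackaging of Theorem \ref{theo_Phi}: it supplies the $\CC(q)$-algebra isomorphism $\Phi: U_q(\fn_Q) \cong \tM{K}^{nil}(\Lambda_Q)$ together with the basis $\{\bfI_Z\}_{Z\in\Irr \Lambda_\alpha}$ of each homogeneous piece, and taking graded duals via (\ref{eq_isoAq}) yields a dual basis $\{\eta_Z\} \subset \tM{K}^*(\Lambda_\alpha) \cong A_q(\fn_Q)_\alpha$, which furnishes the asserted basis.

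The second step is to identify $\eta_Z$ with $\delta_Z$ under $\varPsi$. The key input is Corollary \ref{cor_bfI}: for any word $\bfi = [i_1\cdots i_n] \in \la Q_0\ra_\alpha$ and any $x \in \Lambda_\alpha$,
\[(\bfI_{i_1}\star\cdots\star\bfI_{i_n})_x = q^{-d_\bfi}\HH^\bullet(\Gr_\bfi(x)).\]
Taking $x=z$ to be a generic point of an irreducible component $Z$ and substituting into the bilinear form (\ref{eq_bilinear}) immediately gives
\[\la \bfI_{i_1}\star\cdots\star\bfI_{i_n},\, \bfI_Z \ra \;=\; q^{-d_\bfi}\chi_q(\Gr_\bfi(z)).\]
Since the products $\bfI_\bfi := \bfI_{i_1}\star\cdots\star\bfI_{i_n}$ (over words in $\la Q_0\ra_\alpha$) span $\tM{K}^{nil}(\Lambda_\alpha)$, the element $\eta_Z$ is determined by these values, and its image under $\varPsi$ is exactly $\sum_\bfi q^{-d_\bfi}\chi_q(\Gr_\bfi(z))\,\bfi = \delta_Z$.

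The third step is to match this output with Leclerc's embedding $\varPhi$ from (\ref{eq_Aqshuffle}), so that the $\delta_Z$ are recognized as elements of $A_q(\fn_Q)$ inside the shuffle algebra. By construction $\varPhi(f) = \sum_\bfi \la e_\bfi, f\ra\,\bfi$ for $f \in A_q(\fn_Q)_\alpha$, and under $\Phi$ the generator $e_\bfi$ maps to $\bfI_\bfi$ up to the normalization $q^{-d_\bfi}$ appearing in Corollary \ref{cor_bfI}; transporting the pairing via $\Phi^*$ thus equates $\varPhi(\Phi^*(\eta_Z))$ with $\varPsi(\eta_Z) = \delta_Z$ up to the ``$q$-powers'' indicated in the statement. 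Injectivity of $\varPsi$ (Lemma \ref{lem_words}) then certifies that $\{\delta_Z\}_{Z \in \Irr \Lambda_\alpha}$ is linearly independent, and by counting dimensions against $U_q(\fn_Q)_\alpha$ it is a basis of the image of $A_q(\fn_Q)_\alpha$.

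The main obstacle I anticipate is the comparison between $\varPsi$ and $\varPhi$: one must check that the multiplicative structure $\bar m$ on $\tM{K}^*(\Lambda_Q)$, dual to the induction product of Section 2.5, is intertwined by $\varPsi$ with the shuffle product defined by the sign exponent $e(\sigma) = \sum_{k\leq m < l,\,\sigma(k) < \sigma(l)}(\alpha_{i_{\sigma(k)}},\alpha_{i_{\sigma(l)}})$. Concretely this requires analysing the restriction functor dual to $\Ind_{\adb}$ on the flag varieties $\Lambda_{\bfi,\alpha}$, and comparing the resulting dimension shifts with the shuffle sign; this is the bookkeeping that absorbs the ``up to $q$-powers'' caveat. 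Once that compatibility is in place the rest of the argument is a clean dualization of Theorem \ref{theo_Phi}.
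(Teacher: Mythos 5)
Your proposal is correct and follows essentially the same route as the paper: both arguments dualize Theorem \ref{theo_Phi} to get the basis indexed by irreducible components, compute the pairing $\la \bfI_\bfi, \bfI_Z\ra = q^{-d_\bfi}\chi_q(\Gr_\bfi(z))$ via Corollary \ref{cor_bfI}, and match the result against Leclerc's formula $\varPhi(x)=\sum_\bfi(e_\bfi,x)\bfi$ for the shuffle embedding. The compatibility of $\varPsi$ with the shuffle product that you flag as the main obstacle is exactly what the paper disposes of (rather tersely) in the paragraph preceding the theorem, by invoking (\ref{eq_Aqshuffle}) and (\ref{eq_barm}).
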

\begin{proof}
    Let $\circ$ be the product of $k\la Q_0\ra$. Following \cite{leclerc2004dual}, we see that $\varPhi$ is given by \[\varPhi(x)=\sum_\bfi(e_\bfi,x)\bfi. \]
    By bilinear form (\ref{eq_bilinear}), Corollary \ref{cor_bfI}, and Theorem \ref{theo_Phi}, we get 
     \[\delta_Z:=\sum_{\bfi\in\la Q_0\ra}(\bfI_\bfi,Z)\bfi=\sum_{\bfi\in\la Q_0\ra_\alpha} q^{-d_\bfi}\chi_q(\Gr_\bfi(z))\bfi \]
    where $Z$ is an irreducible component of $\Lambda_\alpha$, $z$ is a generic point of $Z$, and $d_\bfi=\odim\Lambda_{\bfi,\alpha}$. 
\end{proof}

\end{document}